\documentclass[12pt]{amsart}
\usepackage{amssymb, latexsym}
\usepackage{enumerate}
\usepackage{amsmath}
\usepackage{dsfont}
\usepackage{a4wide}
\usepackage{mathrsfs}
\usepackage[cp1250]{inputenc}
\usepackage[T1]{fontenc}
\usepackage{tabularx}
\usepackage{multirow}
\usepackage{scalerel}
\usepackage{etoolbox}
\patchcmd{\section}{\normalfont}{\normalfont\Large}{}{}
\patchcmd{\section}{\scshape}{\bfseries}{}{}
\makeatletter
\renewcommand{\@secnumfont}{\bfseries}
\makeatother
\usepackage{lmodern}
\usepackage{hyperref}
\hypersetup{hypertexnames=false}
\let\originalforall=\forall
\renewcommand{\forall}{\mathop{\vcenter{\hbox{\Large$\originalforall$}}}}

\let\originalexists=\exists
\renewcommand{\exists}{\mathop{\vcenter{\hbox{\Large$\originalexists$}}}}

\newtheorem{thm}{Theorem}[]
\newtheorem{cor}[thm]{Corollary}
\newtheorem{lem}[thm]{Lemma}
\newtheorem{prop}[thm]{Proposition}

\newtheorem{prob}{Problem}
\newtheorem{de}[thm]{Definition}
\newtheorem{rem}[thm]{Remark}

\setcounter{footnote}{0}

\date{}
\frenchspacing

\textwidth=13.5cm
\textheight=23cm
\parindent=16pt
\oddsidemargin=-0.5cm
\evensidemargin=-0.5cm
\topmargin=-0.5cm

\title{An invitation to cortex algebras and spectral permanence property}
\author{Przemys{\l}aw Ohrysko}
\address{University of Warsaw, Faculty of Mathematics, Banacha 2A, 02-097 Warsaw, Poland}
\email{p.ohrysko@gmail.com}
\begin{document}
\begin{abstract}
We initiate the study of cortex algebras (commutative Banach algebras with extendable multiplicative-linear functionals) and the spectral permanence property. In addition, we analyze some particular examples in this context and present open problems.
\end{abstract}
\subjclass[2020]{46J05}

\keywords{Spectrum, Commutative Banach algebras}
\maketitle
\section{Introduction}
We start with collecting some basic notation. For the proofs consult textbooks \cite{k},\cite{r2} and \cite{zel}.
\\
Throughout the paper, the letter $A$ will always denote a commutative unital complex Banach algebra with the unit $e$. The Gelfand space of $A$ (the set of all non-zero multiplicative-linear functionals on $A$) will be abbreviated as $\triangle(A)$. Equipped with the weak$^{\ast}$-topology it is a compact Hausdorff space. There is a one-to-one correspondence between $\varphi\in\triangle(A)$ and maximal ideals in $A$ given by $\varphi\to\mathrm{ker}\varphi$ and hence $\triangle(A)$ is often called a maximal ideal space of $A$.
\\
The spectrum of $x\in A$ is defined as the set:
\begin{equation*}
    \sigma(x)=\{\lambda\in\mathbb{C}:\lambda e-x\text{ is not invertible in }A\}.
\end{equation*}
If there is more than one Banach algebra involved, for example, $A$ is a closed subalgebra of $B$, then $\sigma_{A}(x)$ and $\sigma_{B}(x)$ will refer to spectra calculated in $A$ or $B$ (respectively). Recall that the spectrum of any $x\in A$ is a non-empty compact subset of the complex plane. By the Gelfand theory
\begin{equation*}
    \sigma(x)=\widehat{x}\left(\triangle(A)\right),
\end{equation*}
where for every $x\in A$ we define the Gelfand transform $\widehat{x}:\triangle(A)\to\mathbb{C}$ of $x$ by the formula:
\begin{equation*}
    \widehat{x}(\varphi):=\varphi(x)\text{ for }\varphi\in\triangle(A).
\end{equation*}
It is well-known that the spectrum of an element $x\in A$ may change if we calculate it in a larger algebra $B$ containing $A$ as a closed subalgebra. In this regard, we introduce the following definition.
\begin{de}\label{spm}
    Let $A$ be a unital commutative Banach algebra. We say that $A$ has a \textbf{spectral permanence property}, if for every commutative Banach algebra $B$ containing $A$ as a closed subalgebra and for every $x\in A$ we have:
    \begin{equation*}
        \sigma_{A}(x)=\sigma_{B}(x).
    \end{equation*}
\end{de}
In the literature, one may find a notation of a permanent spectrum of an element $x\in A$ as the intersection of spectras of $x$ computed in all algebras containing $A$ as a closed subalgebra. In these terms, an algebra $A$ has a spectral permanence property, if every element has its spectrum equal to the permanent spectrum.
\\
It turns out that the spectral properties of elements in a Banach algebra are related to the \textbf{cortex} of a Banach algebra introduced by R. Arens in \cite{a} which is defined as the set of all multiplicative-linear functionals, which extend to any superalgebra containing the original algebra as a closed subalgebra and abbreviated $\mathrm{cor}(A)$ for $A$ unital commutative Banach algebra $A$.
It is an easy exercise to check that $\mathrm{cor}(A)$ is a closed subset of $\triangle(A)$. We are ready now to present the second main definition needed later on.
\begin{de}\label{cor}
    A unital commutative Banach algebra $A$ is called a \textbf{cortex algebra}, if $\mathrm{cor}(A)=\triangle(A)$.
\end{de}
In the next section, we will present basic results on spectral permanence property and cortex algebras supplemented with some examples. The third section will be devoted to showing that measure algebras have a spectral permanence property. In the last part, we will briefly discuss Fourier-Stieltjes algebras and other algebras in this context and state some open questions concerning both particular examples and the general relation between Definition \ref{spm} and Definition \ref{cor}.
\section{Basic results and examples}
We start by recalling a standard theorem on the relationship between the spectras of $x\in A$ when $A$ is a closed subalgebra of a Banach algebra $B$ from \cite{r2}.
\begin{thm}
    Let $A$ be a closed subalgebra of a Banach algebra $B$. Then $\sigma_{A}(x)$ is a set-theoretic sum of $\sigma_{B}(x)$ and (possibly empty) family of bounded components of the complement of $\sigma_{B}(x)$.
\end{thm}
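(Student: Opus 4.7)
The plan is to establish two inclusions, $\sigma_{B}(x)\subseteq\sigma_{A}(x)$ and $\partial\sigma_{A}(x)\subseteq\sigma_{B}(x)$, and then extract the geometric description of $\sigma_{A}(x)$ by a connectedness argument on the components of $\mathbb{C}\setminus\sigma_{B}(x)$.

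First I would verify $\sigma_{B}(x)\subseteq\sigma_{A}(x)$, which is immediate: if $\lambda e-x$ is invertible in $A$, then its inverse lies in $A\subseteq B$ and witnesses invertibility in $B$. Contrapositively, every $\lambda\in\sigma_{B}(x)$ belongs to $\sigma_{A}(x)$.

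The second, more substantial inclusion is $\partial\sigma_{A}(x)\subseteq\sigma_{B}(x)$. For this I would use the classical fact that boundary points of the spectrum in a Banach algebra are topological divisors of zero: if $\lambda\in\partial\sigma_{A}(x)$, then there exist $\lambda_{n}\notin\sigma_{A}(x)$ with $\lambda_{n}\to\lambda$, and the norms $\|(\lambda_{n}e-x)^{-1}\|_{A}$ must blow up (otherwise one could invert $\lambda e-x$ by a Neumann-type perturbation argument, contradicting $\lambda\in\sigma_{A}(x)$). Rescaling these inverses yields unit vectors $y_{n}\in A$ with $\|(\lambda e-x)y_{n}\|_{A}\to 0$. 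Because the norm of $A$ agrees with the norm of $B$ on $A$, the same sequence shows $\lambda e-x$ is a topological divisor of zero in $B$, hence not invertible there, i.e.\ $\lambda\in\sigma_{B}(x)$. This step is the only analytic one and is the main point where care is needed.

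With both inclusions in hand, I would fix a connected component $U$ of $\mathbb{C}\setminus\sigma_{B}(x)$ and examine $U\cap\sigma_{A}(x)$. The topological boundary of $U\cap\sigma_{A}(x)$ relative to $U$ is contained in $U\cap\partial\sigma_{A}(x)\subseteq U\cap\sigma_{B}(x)=\emptyset$, so $U\cap\sigma_{A}(x)$ is clopen in $U$. Connectedness of $U$ forces $U\subseteq\sigma_{A}(x)$ or $U\cap\sigma_{A}(x)=\emptyset$. Applying this to the unique unbounded component $U_{\infty}$ of $\mathbb{C}\setminus\sigma_{B}(x)$, boundedness of $\sigma_{A}(x)$ rules out $U_{\infty}\subseteq\sigma_{A}(x)$, so $U_{\infty}$ contributes nothing. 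Collecting the bounded components that happen to sit inside $\sigma_{A}(x)$ into a family $\mathcal{F}$, I obtain
\begin{equation*}
    \sigma_{A}(x)=\sigma_{B}(x)\cup\bigcup_{U\in\mathcal{F}}U,
\end{equation*}
which is exactly the claimed decomposition.
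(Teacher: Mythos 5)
Your argument is correct and is exactly the standard proof of this result (Theorem 10.18 in Rudin's \emph{Functional Analysis} \cite{r2}), which the paper itself does not reprove but merely cites: the key inclusion $\partial\sigma_{A}(x)\subseteq\sigma_{B}(x)$ obtained by showing boundary points of the spectrum are topological divisors of zero, followed by the clopen-in-a-connected-component argument on $\mathbb{C}\setminus\sigma_{B}(x)$ and the exclusion of the unbounded component by boundedness of $\sigma_{A}(x)$. All steps check out; nothing needs to be added.
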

The following corollary follows immediately. 
\begin{cor}
    If $\sigma_{A}(x)$ has an empty interior, then $\sigma_{A}(x)=\sigma_{B}(x)$.
    More generally, if for every $x\in A$, $\sigma_{A}(x)$ has an empty interior, then $A$ has a spectral permanence property.
\end{cor}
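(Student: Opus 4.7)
The plan is to derive both statements directly from the preceding theorem, which describes $\sigma_A(x)$ as $\sigma_B(x)$ together with a (possibly empty) collection of bounded components of $\mathbb{C}\setminus\sigma_B(x)$. First I would record the elementary fact that $\sigma_B(x)\subseteq\sigma_A(x)$, since invertibility in the subalgebra $A$ trivially implies invertibility in $B$; this means the only issue is whether the family of bounded components adjoined in the theorem is empty.

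The key observation I would use is topological: since $\sigma_B(x)$ is compact in $\mathbb{C}$, the complement $\mathbb{C}\setminus\sigma_B(x)$ is open, and in a locally connected space such as $\mathbb{C}$ the connected components of an open set are themselves open. Hence any bounded component $U$ of $\mathbb{C}\setminus\sigma_B(x)$ that happens to be contained in $\sigma_A(x)$ is automatically a non-empty open subset of $\sigma_A(x)$, so it lies in the interior of $\sigma_A(x)$.

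Now the first assertion follows by contraposition: if $\sigma_A(x)\neq\sigma_B(x)$, then by the theorem at least one bounded component of $\mathbb{C}\setminus\sigma_B(x)$ is added, hence $\sigma_A(x)$ has non-empty interior. Equivalently, assuming $\sigma_A(x)$ has empty interior forces the family of added components to be empty and yields $\sigma_A(x)=\sigma_B(x)$. The second assertion is then immediate: if the hypothesis holds for every $x\in A$, then for each superalgebra $B$ and each $x\in A$ we have $\sigma_A(x)=\sigma_B(x)$, so by Definition \ref{spm} the algebra $A$ has the spectral permanence property.

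I do not foresee a genuine obstacle here; the whole argument is a one-line consequence of the cited theorem combined with the openness of components of open subsets of $\mathbb{C}$. The only point worth stating carefully is that bounded components of the complement are open (not merely connected), which is what converts ``$\sigma_A(x)$ strictly larger than $\sigma_B(x)$'' into ``$\sigma_A(x)$ has non-empty interior''.
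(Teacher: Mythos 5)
Your argument is correct and is exactly the ``immediate'' deduction the paper has in mind (the paper offers no written proof, only the remark that the corollary follows at once from the preceding theorem): the added bounded components of $\mathbb{C}\setminus\sigma_{B}(x)$ are non-empty open sets, so their presence would force $\sigma_{A}(x)$ to have non-empty interior. Your explicit mention that components of an open subset of $\mathbb{C}$ are open is the right detail to make the one-line argument airtight.
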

For purely algebraic reasons, an element $x\in A$ cannot be invertible in any superalgebra if it is a zero divisor. In Banach algebra setting there is a generalization of this notion: an element $x\in A$ is called a \textbf{topological divisor of zero}, if there exists a sequence $x_{n}\in A$ of elements with $\|x_{n}\|=1$ satisfying:
\begin{equation*}
    \lim_{n\to\infty}\|xx_{n}\|=0.
\end{equation*}
The following theorem allows us to formulate an equivalent condition for the spectral permanance property in terms of topological divisors of zero.
\begin{thm}
    Let $A$ be a unital commutative Banach algebra. Then $A$ has a spectral permanence property if and only if every non-invertible element in $A$ is a topological divisor of zero.
\end{thm}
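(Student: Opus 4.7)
The plan is to prove each implication separately, with the non-trivial content concentrated in the necessity direction.

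For sufficiency (every non-invertible element is a topological divisor of zero $\Rightarrow$ spectral permanence), I would fix $x\in A$ and an arbitrary commutative Banach superalgebra $B\supseteq A$, and verify $\sigma_{A}(x)=\sigma_{B}(x)$. The inclusion $\sigma_{B}(x)\subseteq\sigma_{A}(x)$ is automatic, since an inverse computed in $A$ remains an inverse in $B$. For the reverse inclusion, take $\lambda\in\sigma_{A}(x)$; then $\lambda e-x$ is non-invertible in $A$ and, by hypothesis, is a topological divisor of zero, so there exist $y_{n}\in A$ with $\|y_{n}\|=1$ and $(\lambda e-x)y_{n}\to 0$. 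If $\lambda e-x$ were invertible in $B$ with inverse $z\in B$, then $y_{n}=z(\lambda e-x)y_{n}\to 0$, contradicting $\|y_{n}\|=1$; hence $\lambda\in\sigma_{B}(x)$.

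For necessity, I would argue by contrapositive: assuming there is a non-invertible $x\in A$ that is not a topological divisor of zero, I would exhibit a commutative Banach superalgebra $B\supseteq A$ (with $A$ as a closed subalgebra) in which $x$ becomes invertible; this directly contradicts the spectral permanence property since $0\in\sigma_{A}(x)\setminus\sigma_{B}(x)$. The working hypothesis furnishes a constant $c>0$ with $\|xy\|\geq c\|y\|$ for every $y\in A$, and iteration yields $\|x^{n}y\|\geq c^{n}\|y\|$ for all $n\geq 0$.

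The construction follows the classical inverse-producing extension due to Arens: formally adjoin to $A$ a new indeterminate $t$ with the relation $xt=e$. Concretely, one forms the polynomial algebra $A[t]$ modulo the ideal generated by $xt-e$, and equips it with the submultiplicative seminorm
\begin{equation*}
    \Bigl\|\sum_{i=0}^{n}a_{i}t^{i}\Bigr\|_{B}=\inf\sum_{i=0}^{n}\|a_{i}\|\,c^{-i},
\end{equation*}
where the infimum runs over all polynomial representatives of the given class; completing and quotienting by the kernel (if nontrivial) yields the candidate $B$, in which $t$ serves as the inverse of $x$. The main obstacle is to verify that this prescription restricts to the original norm on $A$, i.e.\ that the embedding $A\hookrightarrow B$ is isometric; the technical heart of this verification is precisely the bounded-below estimate $\|x^{n}a\|\geq c^{n}\|a\|$, which prevents any polynomial representative involving positive powers of $t$ from having smaller total weight than $a$ itself. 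Once the isometric embedding is established, submultiplicativity and commutativity of $B$ are immediate from the construction, and the proof concludes.
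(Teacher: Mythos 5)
Your proposal is correct and follows essentially the same route as the paper: the easy direction is the same elementary argument (an inverse in $B$ would force the approximate annihilators $y_{n}$ to zero), and the hard direction is the Arens inverse-producing extension. Your presentation of that extension --- the polynomial algebra $A[t]$ modulo $xt-e$ with the weighted seminorm $\inf\sum\|a_{i}\|c^{-i}$, then completed --- is just a repackaging of the paper's quotient $\widetilde{A}/\overline{(e-xt)\widetilde{A}}$ of the absolutely convergent power series algebra, with the bounded-below estimate $\|x^{n}a\|\geq c^{n}\|a\|$ playing the identical role of making the embedding of $A$ isometric.
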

\begin{proof}
    The argument here is standard and goes back to Arens \cite{a}. We present a sketch of the proof for the convenience of the readers. Note that the spectrum of an element $x\in A$ may possibly change when calculated in some larger algebra only if some elements non-invertible in $A$ become invertible in $B$. This, of course, cannot happen if every non-invertible element in $A$ is a topological divisor of zero. In the other direction, we need to prove that if a non-invertible element $x\in A$ is not a topological divisor of zero, then there exists a unital commutative Banach algebra $B$ containing $A$ as a closed subalgebra such that $x$ is invertible in $B$. In order to do so we first form an algebra of absolutely convergent power series over $A$:
    \begin{equation*}
        \widetilde{A}=\{\widetilde{x}(t)=\sum_{n=0}^{\infty}x_{n}t^{n},\text{ }x_{n}\in A,\text{ }\sum_{n=0}^{\infty}\|x_{n}\|<\infty\}.
    \end{equation*}
    Then we consider an ideal $J=(e-xt)\widetilde{A}$ in $\widetilde{A}$ and its closure $I=\overline{J}$. Then $B=\widetilde{A}/I$ is the desired Banach algebra (the details can be found in \cite{a} and \cite{k})
\end{proof}
We now turn our attention to the cortex algebras. We first prove a simple theorem which partially establishes the relation between Definition \ref{spm} and Definition \ref{cor}.
\begin{thm}
    If $A$ is a cortex algebra, then $A$ has a spectral permanence property.
\end{thm}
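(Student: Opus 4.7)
The plan is to establish the nontrivial inclusion $\sigma_A(x) \subseteq \sigma_B(x)$ for an arbitrary commutative Banach algebra $B$ containing $A$ as a closed subalgebra; the reverse inclusion $\sigma_B(x) \subseteq \sigma_A(x)$ is automatic, since any inverse of $\lambda e - x$ in $A$ is \emph{a fortiori} an inverse in $B$.

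First, I would fix $x \in A$ and an arbitrary $\lambda \in \sigma_A(x)$. By the Gelfand representation already recalled in the introduction, $\sigma_A(x) = \widehat{x}(\triangle(A))$, so there exists $\varphi \in \triangle(A)$ with $\varphi(x) = \lambda$. This reduces the problem to exhibiting a proper ideal of $B$ containing $\lambda e - x$.

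Next, I would invoke the hypothesis. Since $A$ is a cortex algebra, $\mathrm{cor}(A) = \triangle(A)$, so in particular $\varphi \in \mathrm{cor}(A)$. By the definition of the cortex, $\varphi$ extends to some $\psi \in \triangle(B)$. A multiplicative-linear functional on a unital algebra necessarily satisfies $\psi(e) = 1$, so
\begin{equation*}
    \psi(\lambda e - x) = \lambda \psi(e) - \psi(x) = \lambda - \varphi(x) = 0.
\end{equation*}
Therefore $\lambda e - x \in \ker \psi$, which is a proper ideal of $B$, and hence $\lambda e - x$ cannot be invertible in $B$. This yields $\lambda \in \sigma_B(x)$ and completes the argument.

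The proof is essentially a direct translation of the definitions through Gelfand theory, so I do not anticipate a genuine obstacle. The only subtle point worth verifying explicitly is that a multiplicative-linear functional extended from $A$ to $B$ still sends the common unit to $1$ and is still nonzero, so that $\ker \psi$ is a proper ideal; this follows from the definition of $\triangle(B)$ and ensures the final contradiction with invertibility.
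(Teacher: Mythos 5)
Your proof is correct and follows essentially the same route as the paper: both use the Gelfand description $\sigma_A(x)=\widehat{x}(\triangle(A))$ to pick $\varphi\in\triangle(A)$ with $\varphi(x)=\lambda$, extend it to $\psi\in\triangle(B)$ via the cortex hypothesis, and conclude $\lambda\in\sigma_B(x)$. Your explicit check that $\psi(e)=1$ and that $\ker\psi$ is a proper ideal is a harmless elaboration of a step the paper leaves implicit.
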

\begin{proof}
    Let $B$ be a unital, commutative Banach algebra containing $A$ as a closed subalgebra and let $x\in A$. Invertible elements in $A$ are obviously invertible in $B$ giving the inclusion $\sigma_{B}(x)\subset\sigma_{A}(x)$. But if $\lambda\in\sigma_{A}(x)$ then there exists $\varphi_{0}\in\triangle(A)$ such that $\lambda=\varphi_{0}(x)$ and since $A$ is a cortex algebra there exists an extension $\widetilde{\varphi_{0}}\in\triangle(B)$ of $\varphi_{0}$ which implies $\lambda\in\sigma_{B}(x)$ and completes the proof.
\end{proof}
In order to give easily verifiable conditions implying that a Banach algebra is cortex we need to recall the notion of \textbf{Shilov boundary} of $A$: it is the smallest closed subset of $\triangle(A)$ (usually denoted $\partial(A)$) such that
\begin{equation*}
    \|\widehat{x}|_{\partial(A)}\|_{\infty}=\|\widehat{x}\|_{\infty}\text{ for every }x\in A.
\end{equation*}
The existence of the Shilov boundary is a non-trivial fact but nowadays it is usually a part of a basic course devoted to commutative Banach algebras.
There is an interesting connection between the Shilov boundary and the topological divisors of zero (see Corollary 3.4.8 in \cite{k}).
\begin{thm}\label{sd}
    Let $\varphi\in\partial(A)$. Then every $x\in\mathrm{ker}\varphi$ is a topological divisor of zero.
\end{thm}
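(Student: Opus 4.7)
The plan is to argue by contradiction using the spectral radius formula together with the minimality of the Shilov boundary. Suppose $x\in\ker\varphi$ is not a topological divisor of zero; I want to produce a $y\in A$ whose Gelfand transform concentrates its supremum so tightly near $\varphi$ that $\widehat{xy}$ cannot be large enough, and then contradict a lower bound on $r(xy)$ coming from the non-topological-divisor assumption.

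First, I would unpack the assumption. If $x$ is not a topological divisor of zero, then there exists $c>0$ with $\|xz\|\ge c\|z\|$ for every $z\in A$. A short induction gives $\|x^{n}z\|\ge c^{n}\|z\|$ for all $n\ge 1$ and $z\in A$. Specializing $z=y^{n}$ for an arbitrary $y\in A$ and using commutativity yields $\|(xy)^{n}\|=\|x^{n}y^{n}\|\ge c^{n}\|y^{n}\|$. Taking $n$-th roots and passing to the limit gives $r(xy)\ge c\,r(y)$, so by the Gelfand formula for the spectral radius in a commutative Banach algebra,
\begin{equation*}
   \sup_{\psi\in\triangle(A)}\bigl|\widehat{x}(\psi)\widehat{y}(\psi)\bigr|\;\ge\; c\sup_{\psi\in\triangle(A)}\bigl|\widehat{y}(\psi)\bigr|\qquad\text{for every }y\in A.
\end{equation*}

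The second step is to exploit $\varphi\in\partial(A)$ and the fact that $\widehat{x}(\varphi)=\varphi(x)=0$. By continuity of $\widehat{x}$, choose an open neighborhood $U$ of $\varphi$ on which $|\widehat{x}|<c/2$, and set $F:=\triangle(A)\setminus U$. Since $\varphi\in\partial(A)\setminus F$ and $F$ is closed, $F$ cannot be a closed boundary of $A$ (otherwise the Shilov boundary, being the smallest closed boundary, would be contained in $F$, contradicting $\varphi\in\partial(A)$). Hence there exists $y\in A$ with $m:=\sup_{\psi\in F}|\widehat{y}(\psi)|<\|\widehat{y}\|_{\infty}=:M$.

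Finally, I would amplify the gap between $m$ and $M$ by replacing $y$ with $y^{n}$: $\sup_{F}|\widehat{y}^{n}|=m^{n}$ while $\|\widehat{y}^{n}\|_{\infty}=M^{n}$. Splitting the supremum of $|\widehat{x}\,\widehat{y}^{n}|$ over $U$ and $F$,
\begin{equation*}
   \sup_{\psi\in\triangle(A)}\bigl|\widehat{x}(\psi)\widehat{y}(\psi)^{n}\bigr|\;\le\;\max\!\left(\tfrac{c}{2}M^{n},\;\|\widehat{x}\|_{\infty}m^{n}\right),
\end{equation*}
and since $m/M<1$, the right-hand side is strictly less than $cM^{n}$ for all sufficiently large $n$. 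This contradicts the inequality from the first step applied to $y^{n}$, completing the argument.

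I expect the main obstacle to be the clean deployment of the Shilov boundary's minimality: one must pick the right characterization (every closed boundary contains $\partial(A)$, equivalently no proper closed subset of $\triangle(A)$ missing $\varphi\in\partial(A)$ can be a boundary) and keep track of the interplay between the neighborhood $U$ chosen from $\widehat{x}$ and the peaking function $y$ supplied by the boundary property. The amplification via $y^{n}$ is what converts the merely strict inequality $m<M$ into the quantitative gap needed to beat the constant $c$.
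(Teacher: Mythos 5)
Your proof is correct. The paper itself gives no argument for this statement, citing it as Corollary 3.4.8 in \cite{k}, and your proof is essentially the standard one from that source: the lower bound $r(xy)\geq c\,r(y)$ extracted from the non-topological-divisor assumption, combined with the minimality of the Shilov boundary to produce a $y$ peaking off the closed set $F$, and the power trick $y\mapsto y^{n}$ to turn the strict inequality $m<M$ into a contradiction.
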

In our context, the following theorem is of particular interest (see Theorem 3.3.9 in \cite{k}).
\begin{thm}
    Let $A$ be a unital commutative Banach algebra. Then $\partial(A)\subset\mathrm{cor}(A)$.
\end{thm}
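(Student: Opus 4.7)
The plan is, given $\varphi\in\partial(A)$ and an arbitrary commutative Banach algebra $B$ containing $A$ as a closed subalgebra, to produce some $\psi\in\triangle(B)$ with $\psi|_A=\varphi$ by exhibiting a candidate ``extension set'' inside $\triangle(A)$ and then squeezing $\partial(A)$ into it via the minimality of the Shilov boundary. The natural candidate is the image of the restriction map
\begin{equation*}
r\colon\triangle(B)\to\triangle(A),\quad r(\psi)=\psi|_A,
\end{equation*}
which is well-defined since $\psi(e)=1$ forces the restriction to be non-zero, and which is continuous in the respective weak$^\ast$-topologies. Hence $r(\triangle(B))$ is a closed subset of $\triangle(A)$ consisting of exactly those functionals on $A$ admitting an extension to $B$, and the problem is reduced to proving $\partial(A)\subset r(\triangle(B))$.

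The central computation is to verify that $r(\triangle(B))$ is norming for the Gelfand transforms of elements of $A$, i.e.
\begin{equation*}
\sup_{\varphi\in r(\triangle(B))}|\widehat{x}(\varphi)|=\|\widehat{x}\|_\infty\quad\text{for every }x\in A.
\end{equation*}
This reduces to the preservation of the spectral radius under closed subalgebras: for $x\in A$ the norms $\|x^n\|$ coincide whether computed in $A$ or $B$, so the limit $\lim_{n\to\infty}\|x^n\|^{1/n}$ agrees, and the Gelfand theory identifies this common value with $\|\widehat{x}\|_\infty$ computed over $\triangle(A)$ and over $\triangle(B)$. Using the identity $\psi(x)=(r(\psi))(x)$ for $x\in A$ and $\psi\in\triangle(B)$, the supremum over $\triangle(B)$ transfers to the supremum over $r(\triangle(B))$, which yields the displayed equality. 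The defining minimality property of $\partial(A)$ then forces $\partial(A)\subset r(\triangle(B))$, so the arbitrary $\varphi\in\partial(A)$ indeed extends to $B$.

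I do not foresee a genuine obstacle; the argument is a clean combination of the spectral radius formula and the minimality of the Shilov boundary. The only delicate points are bookkeeping issues, namely that $r$ never lands at the zero functional (ensured by $\psi(e)=1$ and $e\in A$) and that it is the spectral radius, not the norm, which is the natural quantity transferring between $A$ and $B$. Since $B$ was an arbitrary superalgebra, this establishes $\varphi\in\mathrm{cor}(A)$ and hence $\partial(A)\subset\mathrm{cor}(A)$.
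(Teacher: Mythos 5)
Your argument is correct and complete: the restriction map $r\colon\triangle(B)\to\triangle(A)$ has compact (hence closed) image, the invariance of the spectral radius $\lim_{n\to\infty}\|x^n\|^{1/n}$ under passage to the superalgebra makes $r(\triangle(B))$ a closed norming set for the Gelfand transforms, and the minimality of $\partial(A)$ then gives $\partial(A)\subset r(\triangle(B))$. The paper does not prove this statement but only cites Theorem 3.3.9 in \cite{k}, and your argument is precisely the standard proof given there, so there is nothing to add.
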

Taking into account Definition \ref{cor} we immediately get the following corollary. 
\begin{cor}\label{corc}
    If $\partial(A)=\triangle(A)$, then $A$ is a cortex algebra.
\end{cor}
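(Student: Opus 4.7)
The plan is essentially to chain two inclusions, since the corollary is a direct consequence of the theorem immediately preceding it together with the definition of a cortex algebra. First I would invoke the theorem $\partial(A)\subset\mathrm{cor}(A)$ just established, and then use the hypothesis $\partial(A)=\triangle(A)$ to upgrade this to $\triangle(A)\subset\mathrm{cor}(A)$. The reverse inclusion $\mathrm{cor}(A)\subset\triangle(A)$ is automatic from the very definition of the cortex as a subset of the Gelfand space.

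Combining the two inclusions yields $\mathrm{cor}(A)=\triangle(A)$, which is precisely the defining condition of a cortex algebra from Definition \ref{cor}. No further machinery is needed, in particular no recourse to topological divisors of zero or to the construction of a superalgebra — all of that work has already been absorbed into the preceding theorem that $\partial(A)\subset\mathrm{cor}(A)$.

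Because the statement is a two-line corollary of a theorem whose proof has already been outsourced to \cite{k}, there is really no substantive obstacle here; the only thing to be careful about is not to mistake the direction of the inclusion between $\partial(A)$ and $\mathrm{cor}(A)$. It is worth remarking, however, that the sufficient condition given by the corollary is rather strong: equality $\partial(A)=\triangle(A)$ holds for example when $A$ is a uniform algebra of continuous functions on its own maximal ideal space with no nontrivial analytic structure, so the corollary will cover the standard supply of examples that one encounters in practice.
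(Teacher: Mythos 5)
Your proof is correct and matches the paper's reasoning exactly: the corollary is stated as an immediate consequence of the preceding theorem $\partial(A)\subset\mathrm{cor}(A)$ combined with the trivial inclusion $\mathrm{cor}(A)\subset\triangle(A)$ and Definition \ref{cor}. Nothing further is needed.
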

It should be noted that, in general, we cannot expect the Shilov boundary and cortex to coincide. It was shown by Shilov himself and the detailed discussion of his example, which is a variant of a Beurling algebra, is presented in \cite{a}.
\\
There are many conditions that ensure $\triangle(A)=\partial(A)$, which gives a wealth of examples of cortex algebras. We start with two rich classes of Banach algebras.
\begin{enumerate}
    \item An involutive, commutative Banach algebra $A$ is called \textbf{symmetric}, if $\widehat{x^{\ast}}=\overline{\widehat{x}}$ for every $x\in A$.
    \item A unital, commutative Banach algebra $A$ is called \textbf{regular}, if for every closed subset $C\subset\triangle(A)$ and any $\varphi_{0}\in\triangle(A)\setminus C$ there exists $x\in A$ such that $\widehat{x}|_{C}=0$ and $\widehat{x}(\varphi_{0})\neq 0$.
\end{enumerate}
\begin{prop}\label{propc}
    Let $A$ be unital, commutative Banach algebra. If $A$ is symmetric or regular, then $\partial(A)=\triangle(A)$. In particular, $A$ is a cortex algebra.
\end{prop}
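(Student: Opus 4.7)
The plan is to establish $\partial(A)=\triangle(A)$ by contradiction in both cases, and then simply invoke Corollary \ref{corc} to conclude that $A$ is a cortex algebra. So assume $\varphi_{0}\in\triangle(A)\setminus\partial(A)$; since $\partial(A)$ is closed, we may fix this setup and try to construct an element $x\in A$ whose Gelfand transform is small on $\partial(A)$ yet large at $\varphi_{0}$, contradicting the defining property $\|\widehat{x}\|_{\infty}=\|\widehat{x}|_{\partial(A)}\|_{\infty}$.

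The regular case is immediate from the definition: applying regularity to the closed set $C=\partial(A)$ and the point $\varphi_{0}\notin C$ produces $x\in A$ with $\widehat{x}|_{\partial(A)}\equiv 0$ and $\widehat{x}(\varphi_{0})\neq 0$. Then $\|\widehat{x}|_{\partial(A)}\|_{\infty}=0$ while $\|\widehat{x}\|_{\infty}\geq|\widehat{x}(\varphi_{0})|>0$, contradicting the Shilov property. No further work is required here.

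The symmetric case is the more substantive step; the idea is to pass through Stone--Weierstrass. Since the Gelfand transform always separates points of $\triangle(A)$ and preserves the unit, symmetry ($\widehat{x^{\ast}}=\overline{\widehat{x}}$) tells us that $\widehat{A}$ is a unital, point-separating, \emph{self-adjoint} subalgebra of $C(\triangle(A))$. By the Stone--Weierstrass theorem, $\widehat{A}$ is uniformly dense in $C(\triangle(A))$. Using normality of the compact Hausdorff space $\triangle(A)$ together with $\varphi_{0}\notin\partial(A)$, Urysohn's lemma provides a continuous function $f:\triangle(A)\to[0,1]$ with $f(\varphi_{0})=1$ and $f\equiv 0$ on $\partial(A)$. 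Approximating $f$ uniformly within $1/3$ by some $\widehat{x}$, $x\in A$, one gets $|\widehat{x}(\varphi_{0})|>2/3$ and $\|\widehat{x}|_{\partial(A)}\|_{\infty}<1/3$, again violating the Shilov property.

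The main (mild) obstacle is the symmetric case: one must remember to invoke \emph{complex} Stone--Weierstrass, which is exactly why the involution hypothesis is used — without self-adjointness of $\widehat{A}$, uniform density would fail and the separating argument collapses. Once both cases yield $\partial(A)=\triangle(A)$, Corollary \ref{corc} immediately gives $\mathrm{cor}(A)=\triangle(A)$, i.e.\ $A$ is a cortex algebra.
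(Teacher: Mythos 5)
Your proof is correct and follows essentially the same route as the paper: the regular case is handled identically, and in the symmetric case your Stone--Weierstrass/Urysohn argument is precisely the density statement the paper invokes under the name of the Gelfand--Naimark theorem, followed by the same approximation-and-contradiction step against the defining property of the Shilov boundary.
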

\begin{proof}
    Suppose not, if $A$ is regular, then we find $x\in A$ such that $\widehat{x}|_{\partial(A)}=0$ and $\widehat{x}(\varphi_{0})\neq 0$ for some $\varphi_{0}\in\triangle(A)\setminus\partial(A)$ which contradicts the definition of the Shilov boundary. If $A$ is symmetric, we use Gelfand-Naimark theorem to find $x\in A$ with Gelfand transform satisfying $|\widehat{x}(\varphi)|<\frac{1}{2}$ for $\varphi\in\partial(A)$ and $|\widehat{x}(\varphi_{0})|>\frac{1}{2}$ for some $\varphi_{0}\in\triangle(A)\setminus\partial(A)$ which finishes the proof in the same pattern.
\end{proof}
In view of the last proposition, we are able to list some examples of cortex algebras:
\begin{itemize}
    \item $C(K)$ for a compact space $K$.
    \item The unitization of $L^{1}(G)$ where $G$ is a locally compact Abelian group.
    \item The unitization of $L^{p}(G)$  for $1<p<\infty$ where $G$ is a compact Abelian group.
    \item The unitization of $l^{p}(\mathbb{N})$ for $1\leq p\leq\infty$ with a pointwise multiplication.
\end{itemize}
We skip the discussion of more advanced examples such as Beurling algebras which are regular and thus cortex under some additional conditions (see Section 4.7 in \cite{k}).
\\
Also, it is easy to give some explicit examples of non-cortex algebras. The most classical one is the disc algebra $A(\mathbb{D})$ (the algebra of continuous functions on the closed unit disc that are analytic on the interior of the disc), which can be identified with a closed subalgebra of $C(\mathbb{T})$ (the algebra of continuous functions on the unit circle) via the maximal modulus principle. Clearly $A(\mathbb{D})$ does not have a spectral permanence property as the spectrum of $f(z)=z\in A(\mathbb{D})$ in $A(\mathbb{D})$ is equal to the closed unit disc, but in $C(\mathbb{T})$ the spectrum is equal to the closed unit circle. Other examples include the closure of algebraic polynomials in the annulus in the complex plane. Of course, we can find more complicated non-cortex algebras venturing into the realm of complex analysis in higher dimensions.
\section{Measure algebras}
In this section we will prove that measure algebras have a spectral permanence property. However, since the proof is non-trivial we need first to collect the necessary information. Let $G$ be a locally compact Abelian group with the dual group $\widehat{G}$. The measure algebra $M(G)$ is the set of all complex Borel measures on $G$ equipped with the total variation norm and the convolution product. Measures absolutely continuous with respect to the Haar measure on $G$ can be identified with $L^{1}(G)$ via the Radon-Nikodym theorem. Along with the Gelfand transform of any $\mu\in M(G)$ we consider also its Fourier-Stieltjes transform defined for any $\gamma\in\widehat{G}$ by the formula:
\begin{equation*}
    \widehat{\mu}(\gamma)=\int_{G}\gamma(-x)d\mu(x).
\end{equation*}
It can be easily verified that the mapping $\mu\mapsto\widehat{\mu}(\gamma)$ for fixed $\gamma\in\widehat{G}$ is a multiplicative linear functional on $M(G)$ which allows to treat the Fourier-Stieltjes transform as the restriction of the Gelfand transform to $\widehat{G}\subset\triangle(M(G))$. Moreover, it is not difficult to show that $\widehat{G}\subset\partial(A)$. On the other hand, $\partial(A)\neq \triangle(M(G))$ (see Corollary 8.2.4. in \cite{gm}) and therefore we are not allowed to use Corollary \ref{corc} and Proposition \ref{propc}. There is no efficient description of all multiplicative-linear functionals of $M(G)$ and the spectral properties of both particular elements of $M(G)$ and the whole Gelfand space of $M(G)$ are very complicated. The most prominent fact is the so-called \textbf{Wiener-Pitt phenomenon} which can be equivalently formulated as $\overline{\widehat{G}}\neq \triangle(M(G))$. We will not elaborate on this topic here and the interested reader is advised to consult Chapters 5-8 in \cite{gm}, for more recent developments, check \cite{owg}. 
\\
Another interesting feature of $M(G)$ is the fact that $M(G)=M(L^{1}(G))$ (the multiplier algebra of $L^{1}(G)$) the latter being defined as the set of all mappings $T:L^{1}(G)\to L^{1}(G)$ satisfying $f\ast(Tg)=(Tf)\ast g$ for every $f,g\in L^{1}(G)$. It is a standard fact in harmonic analysis sometimes called Wendel's theorem. 
\begin{thm}\label{mspm}
    Let $G$ be a locally compact Abelian group and let $\mu\in M(G)$ be a non-invertible measure. Then $\mu$ is a topological divisor of zero.
\end{thm}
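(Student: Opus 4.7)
The plan is to establish the contrapositive: if $\mu\in M(G)$ is not a topological divisor of zero, then $\mu$ is invertible in $M(G)$. The proof would combine three ingredients already highlighted in the excerpt---the inclusion $\widehat{G}\subset\partial(M(G))$, Theorem~\ref{sd} relating the Shilov boundary to topological divisors of zero, and Wendel's identification $M(G)=M(L^{1}(G))$---together with one external input, Wiener's Tauberian theorem.

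The first step extracts Fourier-side information: the contrapositive of Theorem~\ref{sd} applied to any $\varphi\in\partial(M(G))$ guarantees $\varphi(\mu)\neq 0$, and restricting to characters $\gamma\in\widehat{G}\subset\partial(M(G))$ yields $\widehat{\mu}(\gamma)\neq 0$ for every $\gamma\in\widehat{G}$. The second step exploits the failure of being a topological divisor of zero to obtain a constant $c>0$ with $\|\mu\ast\nu\|\geq c\|\nu\|$ for all $\nu\in M(G)$; specializing to $\nu=f\in L^{1}(G)\subset M(G)$ shows that the convolution operator $T_{\mu}\colon L^{1}(G)\to L^{1}(G)$, $T_{\mu}f=\mu\ast f$, is bounded below, so its range $J=\mu\ast L^{1}(G)$ is a closed ideal in $L^{1}(G)$. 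The hull of $J$ in $\widehat{G}$ is precisely the zero set of $\widehat{\mu}$, which is empty by the first step, so Wiener's Tauberian theorem forces $J=L^{1}(G)$ and $T_{\mu}$ becomes a continuous bijection with bounded inverse.

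The final step promotes $T_{\mu}^{-1}$ back to a measure. A short verification using surjectivity of $T_{\mu}$---write $h=T_{\mu}k$ and compute $T_{\mu}^{-1}(h\ast g)=T_{\mu}^{-1}(T_{\mu}(k\ast g))=k\ast g=T_{\mu}^{-1}(h)\ast g$---shows that $T_{\mu}^{-1}$ is itself a multiplier of $L^{1}(G)$, so Wendel's theorem supplies $\nu\in M(G)$ with $T_{\mu}^{-1}=T_{\nu}$, and the identity $T_{\nu\ast\mu}=T_{\nu}T_{\mu}=\mathrm{id}=T_{\delta_{0}}$ together with the injectivity of the Wendel correspondence forces $\nu\ast\mu=\delta_{0}$, contradicting non-invertibility of $\mu$. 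The main obstacle I expect is the clean invocation of Wiener's Tauberian theorem: the geometric content that closed ideals with empty hull are dense is classical, but one has to verify carefully that the hull of $\mu\ast L^{1}(G)$ really coincides with the zero set of $\widehat{\mu}$, which requires the existence in $L^{1}(G)$ of functions whose Fourier transform does not vanish at any prescribed character. Everything else amounts to bookkeeping once the three structural pillars are in place.
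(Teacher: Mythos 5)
Your argument is correct and rests on the same three pillars as the paper's proof of Theorem \ref{mspm} --- the inclusion $\widehat{G}\subset\partial(M(G))$ combined with Theorem \ref{sd}, Wendel's theorem, and Wiener's Tauberian theorem --- but it organizes them differently. The paper argues directly and splits into the cases of $G$ compact and non-compact: it first disposes of the possibility that $\widehat{\mu}$ vanishes at some character (for compact $G$ by exhibiting the actual zero divisor $\mu\ast\gamma_{0}=0$, otherwise via Theorem \ref{sd}), then shows that $T_{\mu}$ is injective with dense range, transfers non-invertibility of $\mu$ to non-invertibility of $T_{\mu}$ through the identity $\sigma_{M(G)}(\mu)=\sigma_{B(L^{1}(G))}(T_{\mu})$, and concludes that an injective, dense-range, non-invertible operator cannot be bounded below. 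You instead take the contrapositive: failure to be a topological divisor of zero makes $T_{\mu}$ bounded below, hence of closed range, so the Tauberian theorem upgrades ``dense range'' to genuine surjectivity, and Wendel's theorem converts the bounded inverse $T_{\mu}^{-1}$ back into a measure $\nu$ with $\nu\ast\mu=\delta_{0}$. Your route avoids the compact/non-compact dichotomy altogether and is more constructive, since it exhibits the inverse measure explicitly; the paper's direct route has the advantage that it locates the witnessing sequence inside $L^{1}(G)$, which is exactly what feeds into the quantitative refinement proved immediately after Corollary \ref{mm}. The two points you flag as requiring care --- that the hull of $\mu\ast L^{1}(G)$ equals the zero set of $\widehat{\mu}$, and that for every $\gamma\in\widehat{G}$ there is $f\in L^{1}(G)$ with $\widehat{f}(\gamma)\neq 0$ --- are indeed the only spots needing verification, and both are standard.
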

We will split the proof into two cases in order to emphasize the difference between compact and non-compact groups.
\\
1. \underline{$G$ is compact}.
\\
If there exists $\gamma_{0}\in\widehat{G}$ such that $\widehat{\mu}(\gamma_{0})=0$, then since $G$ is compact, the group characters are integrable and we have $\mu\ast\gamma_{0}=0$ which shows that $\mu$ is a divisor of zero. On the other hand, if $\widehat{\mu}(\gamma)\neq 0$ for every $\gamma\in\widehat{G}$, then we consider an operator $T_{\mu}:L^{1}(G)\to L^{1}(G)$ given by the formula
\begin{equation*}
    T_{\mu}(f)=\mu\ast f\text{ for }f\in L^{1}(G).
\end{equation*}
Note that $T_{\mu}(\gamma)=\mu\ast\gamma=\widehat{\mu}(\gamma)\gamma$ for every $\gamma\in\widehat{G}$. Since we have assumed that $\widehat{\mu}(\gamma)\neq 0$ for every $\gamma\in\widehat{G}$ we clearly see that the image of $T_{\mu}$ contains all trigonometric polynomials (linear combinations of characters). Therefore, $T_{\mu}$ has a dense range. Clearly, $T_{\mu}$ is also injective as if $f\in L^{1}(G)$ satisfied $\mu\ast f=0$, then we would have $\widehat{\mu}\cdot\widehat{f}=0$ giving $\widehat{f}=0$ which implies $f=0$. Now, we refer to the classical Wendel theorem, which gives $M(G)=M(L^{1}(G))$. It follows that 
\begin{equation*}
\sigma_{M(G)}(\mu)=\sigma_{M(L^{1}(G))}(T_{\mu})=\sigma_{B(L^{1}(G))}(T_{\mu}).
\end{equation*}
The first equality follows from Wendel's theorem, and the second is an elementary exercise (one has to prove that the inverse of an invertible multiplier operator is a multiplier operator).
Hence, an operator $T_{\mu}$ is non-invertible. Having dense range and being injective, an operator $T_{\mu}$ cannot be bounded below\footnote{An operator $T:X\to X$ is bounded below if there exists $c>0$ such that the inequality $\|Tx\|\geq c\|x\|$ holds true for every $x\in X$.}. This finally gives
\begin{equation*}
    \inf\{\|\mu\ast f\|_{M(G)}:f\in L^{1}(G),\text{ }\|f\|_{L^{1}(G)}=1\}=0.
\end{equation*}
2. \underline{$G$ is non-compact}.
\\
In this situation, we are not allowed to deduce that if $\widehat{\mu}(\gamma_{0})=0$, then $\mu$ is a divisor of zero by Theorem \ref{sd}. But it follows that $\mu$ is in the kernel of a group character (treated as multiplicative-linear functional on $M(G)$) and since $\widehat{G}\subset\partial(M(G))$ we infer that $\mu$ is a topological divisor of zero. Therefore, in the rest of the proof, we assume that $\widehat{\mu}(\gamma)\neq 0$ for every $\gamma\in\widehat{G}$. Observe that 
\begin{equation*}
    I:=T_{\mu}(L^{1}(G))=\{\mu\ast f:f\in L^{1}(G)\}\text{ is an ideal in $L^{1}(G)$.}
\end{equation*}
Suppose that $I$ is a proper ideal. The natural way to proceed here is to use some standard algebraic fact such as `every ideal is contained in a maximal ideal'. However, $L^{1}(G)$ is not unital and in order to continue we need to refer to a more subtle result which is a variant of Wiener's tauberian theorem (see Section 7.2 in \cite{r1}).
\begin{thm}
    Every proper closed ideal in $L^{1}(G)$ is contained in a maximal modular ideal\footnote{An ideal $I$ is called a maximal modular ideal, if it is maximal and the quotient algebra is unital. In the case of $L^{1}(G)$ the maximal modular ideals are identified with the elements of $\widehat{G}$.}
\end{thm}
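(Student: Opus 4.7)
The plan is to recast the statement in the language of Wiener's Tauberian theorem. First I would recall the standard identification $\triangle(L^{1}(G))=\widehat{G}$: every character $\gamma$ gives rise to the non-zero multiplicative-linear functional $f\mapsto\widehat{f}(\gamma)$, and conversely every such functional arises this way. Under this identification, the maximal modular ideals of $L^{1}(G)$ are precisely the kernels $M_{\gamma}:=\{f\in L^{1}(G):\widehat{f}(\gamma)=0\}$ for $\gamma\in\widehat{G}$, since $L^{1}(G)/M_{\gamma}\cong\mathbb{C}$ is a unital field (hence $M_{\gamma}$ is both modular and maximal), while the Gelfand-Mazur theorem applied to $L^{1}(G)/I$ for an abstract maximal modular $I$ gives the converse direction.

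For a closed ideal $I\subset L^{1}(G)$, I would next define the hull
\begin{equation*}
    h(I):=\{\gamma\in\widehat{G}:\widehat{f}(\gamma)=0\text{ for every }f\in I\},
\end{equation*}
and observe that $\gamma_{0}\in h(I)$ forces $I\subset M_{\gamma_{0}}$ by definition. Thus the statement reduces to showing that $h(I)\neq\emptyset$ whenever $I$ is a proper closed ideal, whose contrapositive, namely that $h(I)=\emptyset$ implies $I=L^{1}(G)$, is exactly Wiener's Tauberian theorem.

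To prove Wiener's theorem I would proceed along classical lines. First, the subspace of those $f\in L^{1}(G)$ whose Fourier transform $\widehat{f}$ has compact support is dense in $L^{1}(G)$, which is obtained by convolving with Féjer-type kernels whose Fourier transforms are compactly supported bump functions. Second, granting $h(I)=\emptyset$, one must show that every such $f$ belongs to $I$; the argument is local-to-global: around each $\gamma$ in the compact set $\mathrm{supp}(\widehat{f})$ pick $g_{\gamma}\in I$ with $\widehat{g_{\gamma}}(\gamma)\neq 0$, extract a finite subcover of neighbourhoods where $\widehat{g_{\gamma}}$ is bounded away from zero, and glue the local quotients $\widehat{f}/\widehat{g_{\gamma}}$ together using a partition of unity inside the Fourier algebra of $\widehat{G}$. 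The technical heart of the proof is this gluing procedure, which requires the symbolic calculus in $L^{1}(G)$---namely the fact that, after localizing, $\widehat{f}/\widehat{g_{\gamma}}$ is again the Fourier transform of an $L^{1}$-function, so that $f$ is realized as a finite sum of convolutions of elements of $I$ with elements of $L^{1}(G)$. This symbolic-calculus step, which ultimately rests on the regularity of $L^{1}(G)$, is the main obstacle; once it is handled, the final deduction of the stated theorem from Wiener's is merely a reformulation via hulls.
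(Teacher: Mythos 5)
The paper does not actually prove this statement---it is quoted verbatim from Section 7.2 of Rudin's \emph{Fourier Analysis on Groups}---so there is no in-paper argument to compare against; what matters is whether your reconstruction is sound. Your reduction is exactly the standard one and is correct: identify $\triangle(L^{1}(G))$ with $\widehat{G}$, identify the maximal modular ideals with the kernels $M_{\gamma}$ (in a commutative Banach algebra the maximal modular ideals are precisely the kernels of the non-zero multiplicative-linear functionals, by Gelfand--Mazur applied to the quotient), and note that the claim is then equivalent, via the hull, to the ideal form of Wiener's Tauberian theorem: a proper closed ideal has non-empty hull. The rest of your proposal is a sketch of Wiener's theorem itself, and you correctly isolate the crux, namely the local division lemma: if $\widehat{g}(\gamma_{0})\neq 0$, then every $f$ whose transform is supported in a sufficiently small neighbourhood of $\gamma_{0}$ lies in the closed ideal generated by $g$, because $\widehat{f}/\widehat{g}$ is locally again the transform of an $L^{1}$-function. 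As written you assert this step rather than prove it, so the argument is not self-contained---and since this single lemma essentially \emph{is} Wiener's theorem, a referee would insist it be either proved or explicitly cited. Still, the roadmap (density of compactly supported transforms, finite subcover, partition of unity taken inside $A(\widehat{G})$ using the regularity of $L^{1}(G)$) is faithful to the classical proof in the very reference the paper points to, so your proposal is a correct outline that simply stops one lemma short of completeness.
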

It is now enough to consider $\overline{I}$. It is a closed ideal in $L^{1}(G)$. If $\overline{I}$ is proper, then by the aforementioned theorem $\overline{I}$ is contained in a maximal modular ideal. As $\triangle(L^{1}(G))=\widehat{G}$ we get $\widehat{\mu\ast f}(\gamma_{0})=0$ for some $\gamma_{0}\in\widehat{G}$ and every $f\in L^{1}(G)$. This is an obvious contradiction since $\widehat{\mu}(\gamma_{0})\neq 0$ and there exists $f\in L^{1}(G)$ such that $\widehat{f}(\gamma_{0})\neq 0$. Finally, $\overline{I}=L^{1}(G)$ which is equivalent to the density of range of $T_{\mu}$ and we can proceed as in the compact case.
\begin{cor}\label{mm}
    For any locally compact Abelian group $G$ the measure algebra $M(G)$ has a spectral permanence property.
\end{cor}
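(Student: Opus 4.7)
The plan is to invoke the characterization of spectral permanence in terms of topological divisors of zero, which was established earlier in the paper, and then appeal directly to Theorem \ref{mspm}. Specifically, the theorem from the first section says that a unital commutative Banach algebra $A$ has the spectral permanence property if and only if every non-invertible element of $A$ is a topological divisor of zero. Theorem \ref{mspm} provides precisely this last condition for $A = M(G)$.

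Before applying that criterion I would briefly check that $M(G)$ satisfies its hypotheses. The measure algebra is a commutative Banach algebra under convolution, and it has a unit, namely the Dirac measure $\delta_{e}$ concentrated at the identity element of $G$; this is a classical fact. Thus $M(G)$ fits into the framework of the equivalence theorem.

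With those two ingredients in place, the proof is now a one-line assembly: by Theorem \ref{mspm} every non-invertible $\mu \in M(G)$ is a topological divisor of zero, hence by the equivalence theorem $M(G)$ has the spectral permanence property. In other words, for every commutative Banach algebra $B$ containing $M(G)$ as a closed subalgebra and every $\mu \in M(G)$ one has $\sigma_{M(G)}(\mu) = \sigma_{B}(\mu)$.

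There is no real obstacle at this stage — all of the substantive work has already been done in the proof of Theorem \ref{mspm}, where the compact and non-compact cases were treated separately using Fourier analysis, Wendel's theorem, and the tauberian theorem for $L^{1}(G)$. The corollary itself is merely the translation of that result through the general equivalence.
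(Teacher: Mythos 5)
Your proposal is correct and is exactly the argument the paper intends: the corollary is an immediate consequence of Theorem \ref{mspm} combined with the earlier equivalence between spectral permanence and every non-invertible element being a topological divisor of zero, and your check that $M(G)$ is unital (via $\delta_{e}$) is a sensible, if routine, addition.
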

\begin{rem}
    The proof in the compact case relies on the fact that the residual spectrum of a multiplier operator on $L^{1}(G)$ is empty. This was originally observed by M. Zafran in greater generality (see \cite{zaf}).
\end{rem}
In fact, the assertion of Theorem \ref{mspm} can be slightly improved.
\begin{thm}
    Let $G$ be a locally compact Abelian group and let $\mu\in M(G)$ be a non-invertible measure. Then there exists a sequence $(f_{n})_{n=1}^{\infty}\subset L^{1}(G)$ with $\|f_{n}\|_{L^{1}(G)}=1$ for every $n\in\mathbb{N}$ satisfying:
    \begin{equation*}
        \lim_{n\to\infty}\|\mu\ast f_{n}\|_{M(G)}=0.
    \end{equation*}
\end{thm}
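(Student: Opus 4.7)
The plan is to reduce the statement to Theorem \ref{mspm} and then ``upgrade'' the approximating measures to functions in $L^{1}(G)$ via a smoothing procedure. Theorem \ref{mspm} provides a sequence $(\nu_{n})_{n=1}^{\infty}\subset M(G)$ with $\|\nu_{n}\|_{M(G)}=1$ and $\|\mu\ast\nu_{n}\|_{M(G)}\to 0$; the only remaining issue is that each $\nu_{n}$ may fail to be absolutely continuous with respect to the Haar measure. The idea is to convolve each $\nu_{n}$ with an element of a bounded approximate identity $(e_{k})\subset L^{1}(G)$ with $\|e_{k}\|_{L^{1}(G)}=1$, which produces an element of $L^{1}(G)$ without significantly altering the relevant norms.

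The crucial technical ingredient I would establish is the following: for every $\nu\in M(G)$,
\begin{equation*}
\|\nu\ast e_{k}\|_{L^{1}(G)}\longrightarrow \|\nu\|_{M(G)}\text{ as }k\to\infty.
\end{equation*}
The plan for the proof is to show first that $\nu\ast e_{k}\to\nu$ in the weak-$\ast$ topology of $M(G)=C_{0}(G)^{\ast}$. Indeed, Fubini gives the identity $\int f\,d(\nu\ast e_{k})=\int(f\ast\check{e_{k}})\,d\nu$ for $f\in C_{0}(G)$, where $\check{e_{k}}(x)=e_{k}(-x)$. Since any $f\in C_{0}(G)$ is uniformly continuous, the standard approximate-identity computation yields $f\ast\check{e_{k}}\to f$ pointwise with uniform bound $\|f\|_{\infty}$, so bounded convergence against the finite measure $\nu$ finishes the verification of weak-$\ast$ convergence. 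The conclusion of the lemma then follows by combining weak-$\ast$ lower semicontinuity of the norm on the dual space $M(G)=C_{0}(G)^{\ast}$ with the trivial estimate $\|\nu\ast e_{k}\|_{L^{1}(G)}\leq\|\nu\|_{M(G)}\|e_{k}\|_{L^{1}(G)}=\|\nu\|_{M(G)}$ and the observation that $\|\nu\ast e_{k}\|_{L^{1}(G)}=\|\nu\ast e_{k}\|_{M(G)}$ since $\nu\ast e_{k}\in L^{1}(G)$.

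With this lemma in hand the remainder is a routine normalization. For each $n$, pick $k_{n}$ with $\|\nu_{n}\ast e_{k_{n}}\|_{L^{1}(G)}\geq 1/2$ and set $f_{n}:=(\nu_{n}\ast e_{k_{n}})/\|\nu_{n}\ast e_{k_{n}}\|_{L^{1}(G)}$. Then $f_{n}\in L^{1}(G)$, $\|f_{n}\|_{L^{1}(G)}=1$, and since $\mu\ast f_{n}\in L^{1}(G)$, submultiplicativity gives
\begin{equation*}
\|\mu\ast f_{n}\|_{M(G)}=\|\mu\ast f_{n}\|_{L^{1}(G)}\leq 2\|\mu\ast\nu_{n}\ast e_{k_{n}}\|_{L^{1}(G)}\leq 2\|\mu\ast\nu_{n}\|_{M(G)}\longrightarrow 0.
\end{equation*}

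The main obstacle is the technical lemma. The difficulty is conceptual rather than computational: norm convergence $\nu\ast e_{k}\to\nu$ in $M(G)$ fails whenever $\nu$ has a non-trivial singular part, so one has to recognize that the correct replacement is weak-$\ast$ convergence together with lower semicontinuity of the dual norm. Once the lemma is in place, the approach bypasses the case split between compact and non-compact $G$ that appears in the proof of Theorem \ref{mspm}, since both cases are already packaged into that theorem.
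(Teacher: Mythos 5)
Your proof is correct, and its overall shape matches the paper's: both arguments start from the sequence $(\nu_{n})$ supplied by Theorem \ref{mspm} and then replace each $\nu_{n}$ by $\nu_{n}\ast h_{n}$ for a suitable norm-one $h_{n}\in L^{1}(G)$ chosen so that $\|\nu_{n}\ast h_{n}\|_{L^{1}(G)}$ stays bounded away from zero, followed by the same normalization and submultiplicativity estimate. The difference lies in how the existence of such an $h_{n}$ is justified. The paper invokes the isometric part of Wendel's theorem, $\|T_{\nu_{n}}\|_{B(L^{1}(G))}=\|\nu_{n}\|_{M(G)}=1$, and takes for $h_{n}$ an arbitrary near-maximizer $g_{n}$ of the operator norm. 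You instead take $h_{n}$ to be an element of a bounded approximate identity and prove directly that $\|\nu\ast e_{k}\|_{L^{1}(G)}\to\|\nu\|_{M(G)}$ via weak-$\ast$ convergence in $C_{0}(G)^{\ast}$ and lower semicontinuity of the dual norm; this is essentially the standard proof of the isometric half of Wendel's theorem, so you are re-deriving the cited ingredient rather than quoting it. Your route is self-contained and arguably more transparent about where the norm identity comes from; the paper's is shorter because the relevant fact is already on record (and is used elsewhere in Section 3). Two cosmetic points: on a non-metrizable group the approximate identity is a net rather than a sequence, but since you only need, for each fixed $n$, a single element $e_{k_{n}}$ with $\|\nu_{n}\ast e_{k_{n}}\|_{L^{1}(G)}\geq 1/2$, this causes no difficulty; and your lower bound $1/2$ plays exactly the role of the paper's $1-\frac{1}{n}$, yielding the same factor of $2$ in the final estimate.
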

\begin{proof}
    From Theorem \ref{mspm} we obtain the existence of a sequence $(\mu_{n})_{n=1}^{\infty}\subset M(G)$ with $\|\mu_{n}\|_{M(G)}=1$ for every $n\in\mathbb{N}$ satisfying:
    \begin{equation*}
        \lim_{n\to\infty}\|\mu\ast\mu_{n}\|_{M(G)}=0.
    \end{equation*}
    Consider operators $T_{n}:L^{1}(G)\to L^{1}(G)$ given by the formula $T_{n}(f)=\mu_{n}\ast f$. By Wendel's theorem $\|T_{n}\|_{B(L^{1}(G))}=\|\mu_{n}\|_{M(G)}=1$. Hence, for every $n\in\mathbb{N}$ there exists $g_{n}\in L^{1}(G)$ with $\|g_{n}\|_{L^{1}(G)}=1$ such that \begin{equation*}
        \|T_{n}(g_{n})\|_{L^{1}(G)}=\|\mu_{n}\ast g_{n}\|_{L^{1}(G)}>1-\frac{1}{n}.
    \end{equation*}
    Since $L^{1}(G)$ is an ideal in $M(G)$ we are allowed to define $f_{n}:=\frac{\mu_{n}\ast g_{n}}{\|\mu_{n}\ast g_{n}\|}\in L^{1}(G)$. Then of course $\|f_{n}\|_{L^{1}(G)}=1$ and
    \begin{gather*}
        \|\mu\ast f_{n}\|_{M(G)}=\|\mu\ast\mu_{n}\ast g_{n}\|_{L^{1}(G)}\cdot (\|\mu_{n}\ast g_{n}\|)^{-1}< \\
        <\|\mu\ast\mu_{n}\|_{M(G)}\cdot\|g_{n}\|_{L^{1}(G)}\cdot\frac{n}{n-1}<2\|\mu\ast\mu_{n}\|_{M(G)}.
    \end{gather*}
\end{proof}
\section{Concluding remarks and open problems}
We start with the most general open problem.
\begin{prob}\label{1}
Is the spectral permanence property of a unital commutative Banach algebra $A$ equivalent to $A$ being a cortex algebra? 
\end{prob}
As this problem seems to be the most influential some additional comments are in place. Recall that a subset $S$ of a unital commutative Banach algebra $A$ consists of \textbf{joint topological zero divisors} if $d(x_{1},\ldots,x_{n})=0$ for any finitely many $x_{1},\ldots,x_{n}\in S$, where
\begin{equation*}
    d(x_{1},\ldots,x_{n})=\inf\left\{\sum_{j=1}^{n}\|x_{j}y\|:y\in A,\text{ }\|y\|=1\right\}.
\end{equation*}
The main difference between the set of topological divisors of zero and the set consisting of joint topological divisors of zero lays in the algebraic properties. Clearly, the sum of two topological divisors of zero may be invertible. It is not possible when we analyze joint topological divisors of zero as the following lemma shows (check Lemma 3.4.12 in \cite{k}).
\begin{lem}
    Let $M$ be a subset of a commutative Banach algebra $A$ consisting of joint topological divisors of zero. Then the closed ideal generated by $M$ also consists of joint topological divisors of zero.
\end{lem}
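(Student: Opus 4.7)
The plan is to fix a finite collection $z_1,\ldots,z_n$ from the closed ideal $I$ generated by $M$ and show directly that $d(z_1,\ldots,z_n)=0$, reducing eventually to the hypothesis that any finite subset of $M$ consists of joint topological divisors of zero. I would organize the argument in two layers: first the algebraic ideal $J$ generated by $M$, and then its closure $I=\overline{J}$.

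For the algebraic layer, suppose each $z_j\in J$ can be written as $z_j=\sum_{k=1}^{m_j} a_{j,k}\,x_{j,k}$ with $a_{j,k}\in A$ and $x_{j,k}\in M$. Using subadditivity of the norm and the submultiplicativity $\|a_{j,k}\,x_{j,k}\,y\|\le \|a_{j,k}\|\cdot\|x_{j,k}\,y\|$, I get, for any $y\in A$ with $\|y\|=1$,
\begin{equation*}
\sum_{j=1}^{n}\|z_j\,y\|\;\le\;C\sum_{j=1}^{n}\sum_{k=1}^{m_j}\|x_{j,k}\,y\|,
\end{equation*}
where $C=\max_{j,k}\|a_{j,k}\|$. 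The right-hand side is a finite sum over finitely many elements of $M$, so by hypothesis it can be made arbitrarily small by appropriate choice of unit-norm $y$, giving $d(z_1,\ldots,z_n)=0$ for elements of $J$.

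For the closure, I would use an $\varepsilon/n$-approximation: given $z_1,\ldots,z_n\in I$ and $\varepsilon>0$, choose $w_j\in J$ with $\|z_j-w_j\|<\varepsilon/(2n)$. Then for any unit-norm $y$,
\begin{equation*}
\sum_{j=1}^{n}\|z_j\,y\|\;\le\;\sum_{j=1}^{n}\|w_j\,y\|+\sum_{j=1}^{n}\|z_j-w_j\|\;<\;\sum_{j=1}^{n}\|w_j\,y\|+\tfrac{\varepsilon}{2}.
\end{equation*}
By the first layer applied to $w_1,\ldots,w_n$, the remaining sum can be chosen less than $\varepsilon/2$, which yields $d(z_1,\ldots,z_n)\le\varepsilon$ and, by arbitrariness, equality to zero.

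There is no serious obstacle here; the only point that might require a moment's care is ensuring the reduction in the algebraic step is genuinely uniform in $y$, i.e.\ that the collection of $x_{j,k}$ appearing is the same finite subset of $M$ across all $j,k$, so one may apply the joint topological zero divisor hypothesis in a single step rather than element-wise. This is immediate because the representations of $z_1,\ldots,z_n$ involve only finitely many generators altogether, and the infimum defining $d$ for this enlarged finite set bounds the relevant sum simultaneously.
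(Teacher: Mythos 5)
Your argument is correct and is essentially the standard proof of this fact; the paper itself gives no proof, merely citing Lemma 3.4.12 in Kaniuth's book, where the same two-layer scheme (first finite combinations $\sum_k a_k x_k$ with generators in $M$, then an $\varepsilon/2$-approximation for the closure) is used. The only point worth making explicit is that the representation $z_j=\sum_k a_{j,k}x_{j,k}$ uses that $A$ is unital (so scalar multiples of generators are absorbed into the $a_{j,k}$), which is the paper's standing assumption.
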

The characterization of multiplicative-linear functionals belonging to the cortex has been given by V. M\"uller in \cite{m} summarizing the research of many authors.
\begin{thm}
    Let $A$ be a commutative unital Banach algebra. Then $\varphi\in\triangle(A)$ belongs to $\mathrm{cor}(A)$ if and only if $\mathrm{ker}\varphi$ consists of joint topological divisors of zero.
\end{thm}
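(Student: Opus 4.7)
The plan is to prove the equivalence by handling the two directions separately, with quite different flavors. For the easier reverse direction, assume $\ker\varphi$ consists of joint topological divisors of zero and let $B\supset A$ be any unital commutative superalgebra. I would consider the ideal $I=\ker\varphi\cdot B$ generated in $B$ by $\ker\varphi$. The key step is proving $I$ is a proper ideal: if $e=\sum_{j=1}^{n}x_{j}b_{j}$ with $x_{j}\in\ker\varphi$ and $b_{j}\in B$, pick a sequence $(y_{k})\subset A$ with $\|y_{k}\|=1$ and $\|x_{j}y_{k}\|\to 0$ for each $j$ (available since $\{x_{1},\dots,x_{n}\}$ are joint topological divisors of zero in $A$). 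Multiplying by $y_{k}$ yields $\|y_{k}\|\le\sum_{j}\|b_{j}\|\,\|x_{j}y_{k}\|\to 0$, a contradiction. Then $\overline{I}$ is a proper closed ideal in $B$, so it is contained in a maximal ideal, producing $\widetilde\varphi\in\triangle(B)$ with $\widetilde\varphi|_{\ker\varphi}=0$. Decomposing $x=\varphi(x)e+(x-\varphi(x)e)$ for $x\in A$ shows $\widetilde\varphi|_{A}=\varphi$, proving $\varphi\in\mathrm{cor}(A)$.

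For the forward direction I would argue by contrapositive. Suppose $\ker\varphi$ does not consist of joint topological divisors of zero, so there exist $x_{1},\dots,x_{n}\in\ker\varphi$ and $c>0$ with $\sum_{j=1}^{n}\|x_{j}y\|\ge c\|y\|$ for every $y\in A$. The goal is to build a commutative unital Banach superalgebra $B\supset A$ (as a closed subalgebra) together with elements $y_{1},\dots,y_{n}\in B$ satisfying $\sum_{j=1}^{n}x_{j}y_{j}=e$. If such a $B$ exists, then any candidate extension $\widetilde\varphi\in\triangle(B)$ of $\varphi$ would force $1=\widetilde\varphi(e)=\sum_{j}\varphi(x_{j})\widetilde\varphi(y_{j})=0$, which is impossible; so $\varphi$ does not extend and $\varphi\notin\mathrm{cor}(A)$.

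The construction of $B$ is a several-variable generalization of the Arens absolutely convergent power series trick used in the proof of the topological-divisor characterization of spectral permanence earlier in the paper. Explicitly, I would form
\begin{equation*}
\widetilde A=\Bigl\{\sum_{\alpha\in\mathbb{N}^{n}}a_{\alpha}t^{\alpha}:a_{\alpha}\in A,\ \sum_{\alpha}\|a_{\alpha}\|<\infty\Bigr\},
\end{equation*}
set $J=\bigl(e-\sum_{j}x_{j}t_{j}\bigr)\widetilde A$ and define $B=\widetilde A/\overline{J}$. The class of $t_{j}$ then provides the desired $y_{j}$.

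The main obstacle, and the technical heart of the proof, is verifying that $A$ embeds isometrically (or at least continuously and injectively as a closed subalgebra) into $B$, i.e. that $A\cap\overline{J}=\{0\}$ with control on norms. This is precisely where the quantitative joint-topological-divisor estimate $\sum\|x_{j}y\|\ge c\|y\|$ enters: it provides the lower bound needed to prevent elements of $A$ from being absorbed into the closure of $J$, in the same way that the one-variable inequality $\|xy\|\ge c\|y\|$ does in the classical single-generator Arens construction. Once this embedding is established, the contradiction described above finishes the argument and yields $\varphi\notin\mathrm{cor}(A)$, completing the equivalence.
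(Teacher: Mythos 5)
First, note that the paper does not prove this statement at all: it is quoted from V.~M\"uller's paper \emph{Nonremovable ideals in commutative Banach algebras} (reference [M]), with the explicit remark that it ``summarizes the research of many authors.'' So there is no in-paper proof to compare against, and your attempt has to be judged on its own. Your reverse direction is correct and is indeed the standard argument: the ideal $\ker\varphi\cdot B$ is proper because a representation $e=\sum_j x_jb_j$ would contradict $d(x_1,\dots,x_n)=0$ after multiplying by the witnesses $y_k$ (one only needs that the norm of $B$ restricted to $A$ is equivalent to the norm of $A$, which holds since $A$ is a closed subalgebra), and a maximal ideal containing $\overline{\ker\varphi\cdot B}$ yields the extension. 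That half is fine.

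The forward direction, however, contains a genuine gap, and it sits exactly where you placed the phrase ``in the same way that the one-variable inequality does.'' In the one-variable Arens construction the coefficient recursion forced by membership in $J=(e-xt)\widetilde{A}$ reads $a_k\approx xa_{k-1}$, so the hypothesis $\|xy\|\ge c\|y\|$ applies directly, term by term, and (after rescaling $t$) forces $\sum_k\|a_k\|=\infty$ unless $a_0$ is small. In the several-variable construction the recursion forced by $J=\bigl(e-\sum_jx_jt_j\bigr)\widetilde{A}$ is $a_\beta\approx\sum_{j}x_ja_{\beta-e_j}$, a sum of products with \emph{different} coefficients $a_{\beta-e_j}$. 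The hypothesis $\sum_j\|x_jy\|\ge c\|y\|$ bounds $\sum_j\|x_jy\|$ from below for a \emph{single} $y$; it gives no lower bound on $\bigl\|\sum_jx_jy_j\bigr\|$ when the $y_j$ differ, because the terms $x_ja_{\beta-e_j}$ can cancel. Consequently the claim that $A\cap\overline{J}=\{0\}$ with norm control does not follow ``in the same way,'' and you offer no substitute argument. This is not a routine detail: the question of whether a finite system with $d(x_1,\dots,x_n)>0$ can always be ``removed'' in some extension is precisely what remained open for roughly two decades after Arens's one-variable theorem, and M\"uller's resolution uses a considerably more delicate extension procedure (building the superalgebra step by step with quantitative control of $d(\cdot)$ in each extension) rather than the naive quotient of the $n$-variable power series algebra. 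As written, your proof establishes only the implication ``$\ker\varphi$ consists of joint topological divisors of zero $\Rightarrow\varphi\in\mathrm{cor}(A)$''; the converse remains unproved.
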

The second problem concerns measure algebra.
\begin{prob}\label{2}
    Let $G$ be a locally compact Abelian group. Is $M(G)$ a cortex algebra?
\end{prob}
In view of Corollary \ref{mm} the positive solution to Problem \ref{1} implies the positive solution to Problem \ref{2} but not the other way round.
\\
In Section 3 we have been exploiting the equality $M(G)=M(L^{1}(G))$. It is natural to consider $M(L^{p}(G))$ for the compact Abelian groups $G$ and $p\in(1,\infty)$. In a similar manner as in Section 3 we can prove the following proposition.
\begin{prop}
    Let $G$ be a compact Abelian group and let $T\in M(L^{p}(G))$ be a non-invertible multiplier for some $p\in (1,\infty)$. Then there exists a sequence $(f_{n})_{n=1}^{\infty}\subset L^{p}(G)$ with $\|f_{n}\|_{L^{p}(G)}=1$ such that
    \begin{equation*}
        \lim_{n\to\infty}\|T(f_{n})\|_{L^{p}(G)}=0.
    \end{equation*}
\end{prop}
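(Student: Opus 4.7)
The plan is to mirror the compact case of Theorem~\ref{mspm} verbatim, substituting $L^p(G)$ for $L^1(G)$ throughout. Fix a normalized Haar measure on $G$, so every character $\gamma \in \widehat{G}$ lies in $L^p(G)$ with $\|\gamma\|_{L^p(G)} = 1$. Since $T$ commutes with translations, each character is an eigenvector of $T$: there exists a bounded symbol $\widehat{T}:\widehat{G}\to\mathbb{C}$ satisfying $T\gamma = \widehat{T}(\gamma)\gamma$ for every $\gamma \in \widehat{G}$. This is the same mechanism that drove the $p=1$ proof.

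I would then split on whether the symbol has a zero. If $\widehat{T}(\gamma_0)=0$ for some $\gamma_0\in\widehat{G}$, then $T\gamma_0=0$ and the constant sequence $f_n := \gamma_0$ works trivially. The substantive case is $\widehat{T}(\gamma)\neq 0$ for every $\gamma\in\widehat{G}$. Here I would establish three properties of $T$ that jointly force the required approximation:
\begin{enumerate}
    \item \textbf{Dense range.} For each $\gamma\in\widehat{G}$ one has $\gamma = T\bigl(\widehat{T}(\gamma)^{-1}\gamma\bigr)$, so the range contains every trigonometric polynomial, and these are dense in $L^p(G)$ for $1<p<\infty$.
    \item \textbf{Injectivity.} Since $G$ is compact, $L^p(G)\subset L^1(G)$, so Fourier coefficients are well defined on $L^p(G)$. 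If $Tf=0$, then $\widehat{T}(\gamma)\widehat{f}(\gamma)=0$ for all $\gamma$, forcing $\widehat{f}\equiv 0$ and hence $f=0$.
    \item \textbf{Non-invertibility in $B(L^p(G))$.} Non-invertibility of $T$ in $M(L^p(G))$ upgrades to non-invertibility in $B(L^p(G))$, because the inverse of a translation-invariant bounded operator (should it exist) itself commutes with translations and is therefore a multiplier.
\end{enumerate}

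An injective, non-invertible operator on a Banach space with dense range cannot be bounded below: if it were, it would have closed range, hence full range by density, and the open mapping theorem would contradict non-invertibility. Consequently
\begin{equation*}
\inf\bigl\{\|Tf\|_{L^p(G)}:f\in L^p(G),\ \|f\|_{L^p(G)}=1\bigr\}=0,
\end{equation*}
which produces the desired normalized sequence $(f_n)$.

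I do not expect a serious obstacle; structurally the argument is identical to the compact case of Theorem~\ref{mspm}. The only slightly delicate point is item~(3): the excerpt flags the analogous fact for $p=1$ as an elementary exercise, and the same translation-invariance argument transfers verbatim to $1<p<\infty$, so this step is really the only place where one must be careful about what ``multiplier'' means in the $L^p$ setting.
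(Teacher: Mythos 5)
Your proof is correct and follows exactly the route the paper intends: the paper gives no separate argument for this proposition, stating only that it is proved ``in a similar manner as in Section 3,'' and your three steps (dense range via trigonometric polynomials, injectivity via Fourier coefficients using $L^p(G)\subset L^1(G)$, and transfer of non-invertibility from $M(L^p(G))$ to $B(L^p(G))$) are precisely the adaptation of the compact case of Theorem~\ref{mspm}. The only cosmetic remark is that in the degenerate case $\widehat{T}(\gamma_0)=0$ your constant sequence $f_n=\gamma_0$ already settles the claim directly, which is even slightly cleaner than the paper's ``divisor of zero'' phrasing for $p=1$.
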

The obstacle in deducing that $M(L^{p}(G))$ has a spectral permanence property is the fact that the convolution operator $M_{f}:L^{p}(G)\to L^{p}(G)$ associated with $f\in L^{p}(G)$ and given by the formula $M_{f}(g)=f\ast g$ for $g\in L^{p}(G)$ may have a significantly smaller norm than $\|f\|_{L^{p}(G)}$. It can be easily seen as $\|f\ast g\|_{L^{p}(G)}\leq \|f\|_{L^{1}(G)}\cdot \|g\|_{L^{p}(G)}$. This leads to the following open problem.
\begin{prob}
    Let $G$ be a compact Abelian group. Is $M(L^{p}(G))$ for $p\in(1,\infty)$ a cortex algebra? Does it have a spectral permanence property?
\end{prob}
The last problem is connected to the Fourier-Stieltjes algebras. Let $G$ be a locally compact group. By $B(G)$ and $A(G)$ we denote the Fourier-Stieltjes algebra and the Fourier algebra of $G$ (respectively). For details on these two objects, we refer the reader to \cite{e} and \cite{kl}. We only remark that if $G$ is abelian, then $B(G)$ is canonically isomorphic to $M(\widehat{G})$ and therefore $B(G)$ can be thought of as a generalization of the measure algebra for non-Abelian groups. Moreover, if $G$ is amenable, then $B(G)=M(A(G))$ (see \cite{kl}) and since the variant of the Wiener's tauberian theorem is also true for $A(G)$ (this was already proved in \cite{e}) we are able to repeat the argument from Section 3 to obtain the following result.
\begin{thm}
    Let $G$ be a locally compact amenable group. Then $B(G)$ has a spectral permanence property.
\end{thm}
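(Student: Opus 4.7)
The plan is to follow the blueprint of the proof of Theorem \ref{mspm} virtually verbatim, with the substitutions $M(G)\rightsquigarrow B(G)$, $L^{1}(G)\rightsquigarrow A(G)$, and $\widehat{G}\rightsquigarrow G$ (where elements of $G$ are viewed as point-evaluation multiplicative-linear functionals on $B(G)$ and on $A(G)$). By the equivalence proved earlier in Section 2, it suffices to show that every non-invertible $u\in B(G)$ is a topological divisor of zero.

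First I would perform the case split according to whether $u$ vanishes somewhere on $G$. If $u(g_{0})=0$ for some $g_{0}\in G$, then the point evaluation $\delta_{g_{0}}$ belongs to the Shilov boundary of $B(G)$, which is the exact analog of the inclusion $\widehat{G}\subset\partial(M(G))$ exploited in Section 3; consequently Theorem \ref{sd} applies to $u\in\ker\delta_{g_{0}}$ and directly yields the topological-divisor-of-zero conclusion.

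The substantive case is $u(g)\neq 0$ for every $g\in G$. As in Section 3, I would consider the multiplier $T_{u}\colon A(G)\to A(G)$ defined by $T_{u}(v)=uv$ (which makes sense since $B(G)=M(A(G))$ by amenability) and its range $I=T_{u}(A(G))$, an ideal of $A(G)$. If the closure $\overline{I}$ were a proper closed ideal, then by Eymard's version of Wiener's tauberian theorem for $A(G)$, $\overline{I}$ would be contained in a maximal modular ideal, which under the identification $\triangle(A(G))=G$ has the form $\ker\delta_{g_{0}}$ for some $g_{0}\in G$. Picking $v\in A(G)$ with $v(g_{0})\neq 0$ (possible by the regularity of $A(G)$) would then force $u(g_{0})=0$, contradicting the assumption. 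Hence $T_{u}$ has dense range. Injectivity of $T_{u}$ is immediate: $uv=0$ together with $u(g)\neq 0$ for all $g$ gives $v=0$ pointwise. Using $B(G)=M(A(G))$ and the elementary fact that the inverse of an invertible multiplier is a multiplier, one obtains $\sigma_{B(G)}(u)=\sigma_{B(A(G))}(T_{u})$, so the non-invertibility of $u$ transfers to $T_{u}$. Being injective with dense range yet non-invertible, $T_{u}$ fails to be bounded below, producing a sequence $(v_{n})\subset A(G)$ with $\|v_{n}\|_{A(G)}=1$ and $\|uv_{n}\|_{A(G)}\to 0$. Since $A(G)$ is a norm-preserving closed ideal in $B(G)$, this sequence certifies that $u$ is a topological divisor of zero in $B(G)$.

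The main obstacle is mostly technical hygiene rather than a new idea: one must carefully cite or verify (a) the inclusion $G\subset\partial(B(G))$ used in the vanishing case, (b) the Wendel-type identity $B(G)=M(A(G))$ valid for amenable $G$, and (c) the Eymard--Wiener tauberian theorem for $A(G)$. Each is standard but non-trivial, and the amenability hypothesis is used precisely to secure (b), which is known to fail in general.
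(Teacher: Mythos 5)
Your proposal is correct and follows exactly the route the paper intends: the paper's own proof consists precisely of the remark that, given $B(G)=M(A(G))$ for amenable $G$ and Eymard's Wiener tauberian theorem for $A(G)$, one repeats the Section 3 argument verbatim with $M(G)\rightsquigarrow B(G)$, $L^{1}(G)\rightsquigarrow A(G)$, $\widehat{G}\rightsquigarrow G$. You have simply written out the details (including the Shilov boundary case via $G=\partial(A(G))\subset\partial(B(G))$, which holds since $A(G)$ is a regular closed ideal of $B(G)$) that the paper leaves implicit.
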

Since $B(G)=M(A(G))$ holds true iff $G$ is amenable, it would be natural to formulate a hypothesis on a strong connection between the amenability of $G$ and the spectral permanence property of $B(G)$. However, an example of a non-amenable group $G=SL(2,\mathbb{Z})$ with $\triangle(B(G))=G\cup\{\infty\}$ shows that there is no clear relation between this two notions.
\begin{prob}
    Let $G$ be a locally compact group. Which properties of $G$ are responsible for the spectral permanence property of $B(G)$? When is $B(G)$ a cortex algebra?
\end{prob}

\end{document}